\newtheorem{theorem}{Theorem}[section]
\newtheorem{lemma}[theorem]{Lemma}
\newtheorem{corollary}[theorem]{Corollary}
\numberwithin{equation}{section}
\def\kvec{{\boldsymbol{k}}}
\def\kmax{k_{\mathrm{max}}}
\def\Hrk{{\mathcal{H}_r(\kvec)}}
\def\nfrac#1#2{{\textstyle\frac{#1}{#2}}}
\def\dfrac#1#2{\lower0.15ex\hbox{\large$\frac{#1}{#2}$}}
\title{Asymptotic enumeration of sparse uniform hypergraphs with given degrees}
\author{
Vladimir Blinovsky\thanks{Supported by FAPESP (2012/13341-8, 2013/07699-0) and NUMEC/USP (Project MaCLinC/USP).}\\
\small Instituto de Matem{\' a}tica e Estat{\' \i}stica\\[-0.8ex]
\small Universidade de S{\~ a}o Paulo, 05508-090, Brazil\\[-0.8ex]
\small Institute for Information Transmission Problems\\[-0.8ex]
\small Russian Academy of Sciences\\[-0.8ex]
\small Moscow 127994, Russia\\[-0.8ex]
\small \tt vblinovs@yandex.ru\\
\and
Catherine Greenhill\thanks{Supported by the Australian Research Council grants DP120100197 and DP140101519.} \\
\small School of Mathematics and Statistics\\[-0.8ex]
\small The University of New South Wales\\[-0.8ex]
\small Sydney NSW 2052, Australia\\[-0.8ex]
\small \tt c.greenhill@unsw.edu.au\\
}
\date{9 June 2015}
\begin{document}

\maketitle
\begin{abstract}
Let $r\geq 2$ be a fixed integer.
For infinitely many $n$, let
$\kvec = (k_1,\ldots, k_n)$ be a vector of nonnegative
integers such that their sum $M$ is divisible by $r$.
We present an asymptotic enumeration formula for 
simple $r$-uniform hypergraphs with degree sequence $\kvec$.   
(Here ``simple'' means that all edges are distinct
and no edge contains a repeated vertex.) 
Our formula holds whenever the maximum degree $\kmax$ satisfies
$\kmax^{3} = o(M)$. 
\end{abstract}

\section{Introduction}\label{s:introduction}

Hypergraphs are combinatorial structures which can model
very general relational systems, including some real-world 
networks~\cite{ER,GZCN,Dih}.
Formally, a  \emph{hypergraph} or a set system is defined as a pair
$(V,E)$, where $V$ is a finite set and 
$E$ is a multiset of multisubsets of $V$. 
(We refer to elements of $E$ as \emph{edges}.)
Note that under this definition, a hypergraph may contain repeated
edges and an edge may contain repeated vertices.

If a vertex $v$ has multiplicity at least 2 in the edge $e$,
we say that $v$ is a \emph{loop} in $e$. 
A hypergraph is 
\emph{simple} if it has no loops and no repeated edges.  Here
it is possible that distinct edges may have more than one vertex
in common.  
Let $r\geq 2$ be a fixed integer.
We say that the hypergraph $(V,E)$ is $r$-\emph{uniform} 
if each edge $e\in E$ contains exactly $r$ vertices (counting multiplicities).
Uniform hypergraphs are a particular focus of study, not least
because a 2-uniform hypergraph is precisely a graph.  
We seek an 
asymptotic enumeration formula for the number of $r$-uniform simple
hypergraphs with a given degree sequence, when $r\geq 3$ is constant
and the maximum degree is not too large (the sparse range).

To state our result precisely, we need some definitions.
Let $k_{i,n}$ be a nonnegative integer for all pairs $(i,n)$ of
integers which satisfy $1\leq i\leq n$.  Then for each $n\geq 1$,
let $\kvec = \kvec(n) = (k_{1,n},\ldots, k_{n,n})$.
We usually write $k_i$ instead of $k_{i,n}$.
Define $M = \sum_{i=1}^n k_i$. 
We assume that $M$ is divisible by $r$
for an infinite number of values of $n$,
and tacitly restrict ourselves to such $n$.

We write $(a)_m$ to denote the falling factorial $a(a-1)\cdots (a-m+1)$,
for integers $a$ and $m$.
For each positive integer $t$, let $M_t = \sum_{i=1}^n (k_i)_t$.
Notice that $M_1=M$ and that $M_t\leq \kmax M_{t-1}$ for
$t\geq 2$.

Let $\Hrk$ 
be the set of simple $r$-uniform hypergraphs on the vertex
set $\{ 1,2,\ldots, n\}$ with degrees given by $\kvec = (k_1,\ldots, k_n)$.
Our main theorem is the following.

\begin{theorem}
Let $r\geq 3$ be a fixed integer.
Suppose that $n\to\infty$, $M\to\infty$ and that $\kmax$ satisfies
$\kmax\geq 2$ and $\kmax^{3} = o(M)$.  Then
\[
|\Hrk| = 
   \frac{M!}{\left(M/r\right)!\, (r!)^{M/r}\, \prod_{i=1}^{n}\, k_i!}\,\,
  \exp\biggl( - \frac{ (r-1)\, M_2}{2M} + O(\kmax^{3}/M)\, \biggr).\]
\label{main}
\end{theorem}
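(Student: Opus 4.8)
The plan is to enumerate via the configuration model (the ``pairing'' or ``points'' model) and then estimate the probability that a random pairing yields a simple hypergraph. First I would set up $M$ points, with point-set $W_i$ of size $k_i$ assigned to vertex $i$, so that $\sum_i |W_i| = M$. A \emph{configuration} is a partition of these $M$ points into $M/r$ unordered cells of size $r$; the number of such configurations is exactly the prefactor
\[
\frac{M!}{(M/r)!\,(r!)^{M/r}}.
\]
Each configuration projects to an $r$-uniform hypergraph (with possible loops and repeated edges) by collapsing points to their vertices. A given simple hypergraph in $\Hrk$ arises from exactly $\prod_{i=1}^n k_i!$ configurations, since the $k_i$ points at vertex $i$ may be permuted freely and no cancellation occurs when there are no loops and no repeated edges. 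Hence
\[
|\Hrk| = \frac{M!}{(M/r)!\,(r!)^{M/r}\,\prod_{i=1}^n k_i!}\cdot \Prob(\text{random configuration is simple}),
\]
and the whole problem reduces to showing that this probability equals $\exp\bigl(-(r-1)M_2/(2M) + O(\kmax^3/M)\bigr)$.

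Next I would analyse that probability by controlling the two obstructions to simplicity separately: loops (a cell containing two points from the same $W_i$) and repeated edges (two cells with identical vertex-projections). The natural tool is the method of moments or a switching argument. My preferred route is the \textbf{switching method}: define operations that remove a single loop or a single pair of repeated edges while fixing the degree sequence, count the forward and backward switchings, and take ratios to estimate the expected numbers of each defect. I expect the expected number of loops to be asymptotic to $\binom{r}{2}M_2/M$ to leading order, since a fixed pair of points at a common vertex lands in the same cell with probability $\sim (r-1)/(M-1)$ and there are $\sum_i \binom{k_i}{2}=M_2/2$ such pairs, giving mean $\sim (r-1)M_2/(2M)$. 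Repeated edges should contribute only at order $O(\kmax^3/M)$ and thus be absorbed into the error term. A Poisson-type approximation (or the factorial-moment bounds underlying it) then yields $\Prob(\text{no loops and no repeats}) \approx e^{-\lambda}$ with $\lambda = (r-1)M_2/(2M)$.

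The key technical steps, in order, are: (i) compute the leading order of the expected number of loops and show higher factorial moments factorize, so that the loop count is asymptotically Poisson; (ii) bound the expected number of repeated-edge pairs and show it is $O(\kmax^3/M)$, using $M_t \le \kmax M_{t-1}$ and $M_2 \le \kmax M$ to control the relevant sums; (iii) establish approximate independence of the loop-defects and edge-defects so the joint probability multiplies; and (iv) assemble the error terms, checking that each is dominated by $O(\kmax^3/M)$ under the hypothesis $\kmax^3 = o(M)$. Throughout, the inequalities $M_2\le \kmax M$ and $M_t\le\kmax M_{t-1}$ are what let me convert combinatorial sums into manageable powers of $\kmax$ and $M$.

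The hard part will be step (iii) together with pushing the repeated-edge analysis to the claimed accuracy. Loops alone are relatively benign, but edges can overlap in several vertices simultaneously, and the switching argument must track how creating or destroying one repeated edge interacts with loops and with other repeated edges; ensuring these cross-terms are genuinely $O(\kmax^3/M)$ rather than merely $o(1)$ requires careful bookkeeping of the number of valid switchings and of the degree-dependent correction factors. Getting a clean multiplicative error of the stated order—rather than a weaker additive or lower-order estimate—is where the delicacy lies, and I would expect the bulk of the work to be in the uniform bounds controlling configurations with two or more simultaneous defects.
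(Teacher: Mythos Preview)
Your setup via the configuration model is exactly what the paper does, and your identification of the leading term $(r-1)M_2/(2M)$ from loops, with repeated edges absorbed into the $O(\kmax^3/M)$ error, is correct. The paper also uses a switching argument rather than factorial moments.

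Where the paper differs from your plan is in how it decouples the two defect types, and this is worth noting because it eliminates precisely the step you flag as the hard part. Rather than analysing loop-switchings and repeat-switchings and then arguing approximate independence (your step~(iii)), the paper first passes to the subset $\Lambda_r^+(\kvec)$ of configurations in which every part has at most one loop, no part has a triple point, no two edges share three or more vertices, and the total number of loops is at most $N=\max\{\lceil\log M\rceil,\,\lceil 9(r-1)M_2/M\rceil\}$. A first-moment calculation shows the complement of $\Lambda_r^+(\kvec)$ has probability $O(\kmax^3/M)$. The key observation is that for $r\geq 3$, the condition ``no two edges share three vertices'' already forbids repeated edges, so once you are inside $\Lambda_r^+(\kvec)$ the only remaining obstruction to simplicity is loops. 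One then stratifies $\Lambda_r^+(\kvec)$ by the loop count $\ell$ and uses a single switching (removing one loop) to estimate $|\mathcal{C}_\ell|/|\mathcal{C}_{\ell-1}|$; the sum over $\ell$ is handled by a tailored summation lemma. No separate repeat-edge switching, and no independence argument, is needed: the cross-terms you worry about are confined to the legality analysis of the loop switching (it must not accidentally create a pair of edges with large overlap), and there they are bounded directly by $O(\kmax^2/M)$ per switching.

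Your route would also work, but the paper's upfront restriction buys a substantial simplification: it converts the two-defect problem into a one-defect problem plus a crude expectation bound, so the delicate bookkeeping you anticipate in step~(iii) never arises.
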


As a corollary, we immediately obtain the corresponding formula for 
regular hypergraphs.  Let $\mathcal{H}_r(k,n)$ denote the set of
all $k$-regular $r$-uniform hypergraphs on the vertex set $\{ 1,\ldots, n\}$,
where $k\geq 2$ is an integer, which may be a function of $n$.

\begin{corollary}
\label{main-regular}
Suppose that $n\to\infty$ and that $k$ satisfies $k\geq 2$ and
$k^{2} = o(n)$.  Then
\[ |\mathcal{H}_{r}(k,n)| = \frac{(kn)!}{(kn/r)!\, (r!)^{kn/r}\, (k!)^n}\,
   \exp\biggl( - \dfrac{1}{2}\, (k-1)(r-1) + O(k^{2}/n)\, \biggr).
\]
\end{corollary}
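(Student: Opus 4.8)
The plan is to obtain Corollary~\ref{main-regular} as an immediate specialization of Theorem~\ref{main} to the constant degree sequence $k_i = k$ for all $i$, since for this choice of $\kvec$ we have $\mathcal{H}_r(k,n) = \Hrk$. First I would verify that the hypotheses of Theorem~\ref{main} hold under the stated assumptions. Here $M = \sum_{i=1}^n k_i = kn$ and $\kmax = k$. Because $k\geq 2$ and $n\to\infty$, we have $M = kn \geq 2n \to \infty$, and the divisibility of $M = kn$ by $r$ for infinitely many $n$ is exactly the tacit restriction already imposed in the general setting. The growth condition $\kmax^{3} = o(M)$ reads $k^{3} = o(kn)$, which is precisely the hypothesis $k^{2} = o(n)$. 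Thus Theorem~\ref{main} applies verbatim.

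Next I would evaluate each factor in the formula of Theorem~\ref{main} at $\kvec = (k,\dots,k)$. The product over vertices collapses to $\prod_{i=1}^n k_i! = (k!)^n$, so the combinatorial prefactor becomes $(kn)!\,\bigl/\bigl((kn/r)!\,(r!)^{kn/r}(k!)^n\bigr)$, which matches the prefactor in the corollary. For the exponent I would compute the second moment using the identity $(k)_2 = k(k-1)$, obtaining $M_2 = \sum_{i=1}^n (k_i)_2 = n\,(k)_2 = nk(k-1)$, whence
\[
\frac{(r-1)\,M_2}{2M} = \frac{(r-1)\,nk(k-1)}{2kn} = \frac{(k-1)(r-1)}{2}.
\]
The error term transforms as $O(\kmax^{3}/M) = O\bigl(k^{3}/(kn)\bigr) = O(k^{2}/n)$. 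Collecting these substitutions reproduces exactly the claimed expression.

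Since the corollary is a direct substitution into an already-established result, there is no genuine obstacle to overcome; the only points that require any care are the correct reduction of the hypothesis $\kmax^{3} = o(M)$ to $k^{2} = o(n)$ and the evaluation of $M_2$ via $(k)_2 = k(k-1)$ together with the cancellation $M_2/M = (k-1)$. No additional estimates beyond those already contained in Theorem~\ref{main} are needed.
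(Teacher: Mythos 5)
Your proposal is correct and is exactly the paper's (implicit) argument: the paper obtains Corollary~\ref{main-regular} ``immediately'' from Theorem~\ref{main} by specializing to $k_i = k$, and your substitutions $M = kn$, $M_2 = nk(k-1)$, the reduction of $\kmax^3 = o(M)$ to $k^2 = o(n)$, and the error-term conversion $O(k^3/(kn)) = O(k^2/n)$ are all the verification that is needed. Nothing is missing.
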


\subsection{History}\label{s:history}

In the case of graphs, the best asymptotic formula in the sparse
range is given by McKay and Wormald~\cite{McKW91}.  See that paper 
for further history of the problem.  
Note that their formula has a similar form to ours, but with
many more term in the exponential factor.  This is due to
the fact that it is harder to avoid creating a repeated edge
with a switching when $r=2$.  

The dense range for $r=2$ was
treated in~\cite{ranX,MW90}, but there is a gap between these
two ranges in which nothing is known.

An early result in the asymptotic enumeration of hypergraphs
was given by Cooper et al.~\cite{CFMR}, who considered simple
$k$-regular hypergraphs when $k=O(1)$.  
Dudek et al.~\cite{DFRS} proved an asymptotic formula for 
the number of
simple $k$-regular hypergraphs graphs with $k=o(n^{1/2})$.
A restatement of their result in our notation is the following:

\begin{theorem} \emph{(~\cite[Theorem 1]{DFRS})}\
For each integer $r\geq 3$, define 
\[ \kappa = \kappa(r) = \begin{cases} 1 & \text{ if $r\geq 4$,}\\
                                     \nfrac{1}{2} & \text{ if $r=3$.}
        \end{cases}
\]
Let $\mathcal{H}(r,k)$ denote the set of all simple
$k$-regular $r$-uniform hypergraphs on the vertex set $\{ 1,\ldots, n\}$.
For every $r\geq 3$, if $k=o(n^{\kappa})$ then
\[ |\mathcal{H}(r,k)| = \frac{(kn)!}{(kn/r)!\, (r!)^{kn/r}\, (k!)^n}\,
   \exp\left( -\dfrac{1}{2} (k-1)(r-1)\bigl(1 + O(\delta(n))\bigr) \right)\]
where $\delta(n) = (kn)^{-1/2} + k/n$.
\end{theorem}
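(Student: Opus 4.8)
The plan is to work in the configuration (pairing) model. Assign $k_i$ distinct points to each vertex $i$, giving $M$ points in all, and call a \emph{configuration} a partition of these points into $M/r$ unordered cells of size $r$; there are exactly $\frac{M!}{(M/r)!\,(r!)^{M/r}}$ of them. Projecting each point to its vertex sends a configuration to an $r$-uniform multihypergraph with degree sequence $\kvec$, and a configuration projects to a \emph{simple} hypergraph precisely when no cell contains two points of the same vertex (no \emph{loop}) and no two cells have the same vertex set (no \emph{repeated edge}). Since each simple hypergraph in $\Hrk$ is the image of exactly $\prod_{i=1}^n k_i!$ configurations, it suffices to estimate the proportion $p$ of configurations that are simple and to multiply the total count by $p/\prod_i k_i!$. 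Thus the whole problem reduces to showing $p = \exp\bigl(-(r-1)M_2/(2M) + O(\kmax^3/M)\bigr)$.

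First I would isolate the dominant obstruction. A short first-moment computation shows that the expected number of loops in a uniformly random configuration is $\frac{r-1}{2(M-1)}\sum_i (k_i)_2 = \frac{(r-1)M_2}{2(M-1)}$, since two prescribed points lie in a common cell with probability $(r-1)/(M-1)$; note this equals $(r-1)M_2/(2M) + O(\kmax/M)$, already of the required shape. By contrast, bounding the expected number of repeated edges by $\sum_{\{v_1,\dots,v_r\}} \prod_j (k_{v_j})_2\,\bigl(r!/M^{\,r-1}\bigr)^2 \le O\bigl(\kmax^{r}/M^{\,r-2}\bigr)$, together with $\kmax^{r}/M^{\,r-2} = (\kmax^3/M)(\kmax/M)^{r-3}$ and $\kmax = o(M^{1/3})$, shows that repeated edges (and, similarly, loops of multiplicity $\ge 3$ and configurations carrying two or more overlapping defects) contribute only to the $O(\kmax^3/M)$ error term for every $r \ge 3$. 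Hence only loops affect the leading exponent, and $p = \Prob(\text{no loops})\cdot e^{O(\kmax^3/M)}$.

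To compute $\Prob(\text{no loops})$ with the necessary precision I would use a switching argument rather than the method of moments, because the mean $\lambda := (r-1)M_2/(2M)$ can be as large as $O(\kmax)$, so $e^{-\lambda}$ must be controlled multiplicatively. Let $\mathcal{C}_i$ denote the set of configurations with exactly $i$ loops and no repeated edge. A loop-removal switching takes a cell containing two points $a_1,a_2$ of a vertex $v$ together with a point $b$ of a different cell, chosen so that no new loop or repeated edge is created, and swaps $a_2$ with $b$. Counting the valid choices forwards (from $\mathcal{C}_i$) and backwards (from $\mathcal{C}_{i-1}$), and noting that the number of admissible points $b$ is $M - O(r\kmax)$, yields a ratio of the form
\[
\frac{|\mathcal{C}_i|}{|\mathcal{C}_{i-1}|} = \frac{\lambda}{i}\,\bigl(1 + O(\kmax^2/M)\bigr).
\]
Multiplying these ratios and summing the resulting near-Poisson series $\sum_i (\lambda^i/i!)(1+O(\kmax^2/M))^i$, whose mass concentrates near $i \approx \lambda = O(\kmax)$, produces $|\mathcal{C}_0|/\sum_i|\mathcal{C}_i| = e^{-\lambda + O(\kmax^3/M)}$; here the accumulated error $O(\kmax^2/M)\cdot\lambda = O(\kmax^3/M)$ is exactly the source of the error term in the theorem.

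The main obstacle will be this last step: controlling the per-switching relative error uniformly at the level $O(\kmax^2/M)$ and propagating it through the product of ratios so that the total contribution to the exponent is genuinely $O(\kmax^3/M)$ and not merely $o(1)$. This requires careful bookkeeping of the forbidden choices in both directions of each switching (points sharing a vertex with the affected cells, points lying in cells already meeting $v$, and choices that would inadvertently create a repeated edge), uniform tail bounds showing that configurations with $i \gg \lambda$ loops are negligible, and verifying that excluding repeated-edge-creating switchings perturbs the counts only within the stated error. Once these estimates are in place, combining $p = e^{-\lambda + O(\kmax^3/M)}$ with the configuration-model identity gives the claimed formula for $|\Hrk|$.
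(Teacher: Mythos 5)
Your strategy --- the configuration model, first-moment estimates showing that all defects other than simple loops are rare, loop-removal switchings giving $|\mathcal{C}_i|/|\mathcal{C}_{i-1}|=(\lambda/i)\bigl(1+O(\kmax^2/M)\bigr)$, and summation of the resulting near-Poisson series --- is essentially the strategy this paper uses to prove its main theorem (Theorem~\ref{main}), of which the regular case is Corollary~\ref{main-regular}. Be aware, however, that the paper does not prove the quoted statement at all: it is cited from~\cite{DFRS}, and the paper explicitly notes that its own formula recovers the~\cite{DFRS} formula only in the range $k^2=o(n)$.

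That remark points to the genuine gap in your proposal. The statement you are asked to prove holds for $k=o(n^{\kappa})$ with $\kappa=1$ when $r\geq 4$, i.e.\ for all $k=o(n)$, whereas every reduction in your second paragraph implicitly requires $\kmax^{3}=o(M)$, which in the regular case is $k^{2}=o(n)$. Concretely, take $r\geq 4$ and $n^{1/2}\ll k=o(n)$. The expected number of pairs of parts whose edges share three or more vertices is $\Theta(M_2^3/M^4)=\Theta(k^2/n)\to\infty$, so such ``overlapping defects'' are not rare and cannot be discarded via a first-moment bound: the assertion ``these events have probability $O(\kmax^3/M)$, hence $p=\Prob(\text{no loops})\cdot e^{O(\kmax^3/M)}$'' rests on $1-O(k^2/n)$ being a nontrivial probability bound, which is vacuous once $k^2/n\to\infty$. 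For the same reason the truncation to configurations with few, well-separated defects (the analogue of $\Lambda_r^+$) can no longer be justified, and the legality analysis of the switchings, which relies on every pair of edges meeting in at most two vertices, breaks down. (It is true that the weak error term $O(\delta(n))$ in the target statement could in principle absorb a multiplicative factor $e^{O(k^2/n)}$, but your argument does not produce the estimates in that absorbable form.) So your sketch, with the details filled in, would establish the displayed formula --- indeed with a sharper error term --- for $k=o(n^{1/2})$, which is the full stated range when $r=3$ but covers only part of the stated range when $r\geq 4$; handling $n^{1/2}\leq k=o(n)$ requires additional switchings or a different truncation to deal with large edge intersections, as in~\cite{DFRS}.
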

Note that the factor outside the exponential part matches ours (see
Corollary~\ref{main-regular}), 
and that the exponential part of their formula can be 
rewritten as
\[ \exp\left( -\dfrac{1}{2} (k-1)(r-1) + O(k\delta(n))\right)\]
with relative error 
\[ O(k\delta(n)) = O\bigl(\sqrt{k/n} + k^2/n\bigr).\]
This relative error is only $o(1)$ when $k^2=o(n)$,
matching the range of $k$ covered by Corollary~\ref{main-regular}.
Hence Theorem~\ref{main} can be seen as an extension of~\cite{DFRS}
to irregular degree sequences.

For an asymptotic formula for the number of dense simple $r$-uniform
hypergraphs with a given degree sequence, see~\cite{KLP}.

\subsection{The model, some early results and a plan of the proof}

We work in a generalisation of the configuration model.
Let $B_1, B_2,\ldots, B_n$ be disjoint sets, which we call \emph{cells},
and define $\mathcal{B} = \bigcup_{i=0}^n B_i$. 
Elements of $\mathcal{B}$ are called points.  Assume that
cell $B_i$ contains exactly $k_i$ points, for $i=1,\ldots, n$.
We assume that there is a fixed ordering on the $M$ points of $\mathcal{B}$.

Denote by $\Lambda_r(\kvec)$ the set of all unordered partitions 
$Q = \{ U_1,\ldots, U_{M/r}\}$ of 
$\mathcal{B}$ into $M/r$ parts, where each part has exactly $r$ points.
Then
\begin{equation}
\label{Lambda}
 |\Lambda_r(\kvec)| = \frac{M!}{(M/r)!\, (r!)^{M/r}}.
\end{equation}

Each partition $Q\in\Lambda_r(\kvec)$ defines a
hypergraph $G(Q)$ on the vertex set $\{ 1,\ldots, n\}$ in a natural
way:
vertex $i$ corresponds to the cell $B_i$, and each part $U\in Q$
gives rise to an edge $e_U$ such that the multiplicity of
vertex $i$ in $e_U$ equals $|U\cap B_i|$, for $i=1,\ldots, n$.
Then $G(Q)$ is an $r$-uniform hypergraph with degree sequence $\kvec$.
The partition $Q\in\Lambda_r(\kvec)$ is called \emph{simple}
if $G(Q)$ is simple.

The edge $e_U$ has a loop at $i$ if and only if $|U\cap B_i| \geq 2$.
In this case, each pair of distinct points in $U\cap B_i$ is called
a \emph{loop} in $U$.  
We reserve the letters $e, f$ for edges in a hypergraph,
and use $U$, $W$ for parts in a partition $Q$ (that is,
in the configuration model).

Now we will consider random partitions. 
Each hypergraph in $\Hrk$ corresponds to exactly
\[ \prod_{i=1}^n k_i!\]
partitions $Q\in\Lambda_r(\kvec)$.
Hence, when $Q\in\Lambda_r(\kvec)$ is chosen uniformly at random, 
conditioned on $G(Q)$ being simple, the probability distribution of
$G(Q)$ is uniform over $\Hrk$.
Let $P_r(\kvec)$ denote the probability that a partition 
$Q\in \Lambda_r(\kvec)$ chosen uniformly at random is simple.
Then
\begin{equation}
\label{pr-equation}
  |\Hrk| = \frac{M!}{(M/r)!\, (r!)^{M/r}\, \prod_{i=1}^n k_i!}\, 
           P_r(\kvec).
\end{equation}
Hence it suffices to show that
$P_r(\kvec)$ equals the exponential factor in the
statement of Theorem~\ref{main}. 
As a first step, we identify several events which have probability
$O(\kmax^{3}/M)$ in the uniform probability space over $\Lambda_r(\kvec)$.  

The following lemma will be used repeatedly. In most applications, $c$ will be a small positive integer.
(Throughout the paper, ``$\log$'' denotes the natural logarithm.)

\begin{lemma}
\label{c-parts}
Let $U_1,\ldots, U_c$ be fixed, disjoint $r$-subsets of the set of points $\mathcal{B}$, 
where $r\geq 3$ is a fixed integer and $c = o(M^{1/2})$.
The probability that a uniformly random $Q\in\Lambda_r(\kvec)$ contains the parts 
$\{U_1,\ldots, U_c\}$ is
\[ 
 (1+o(1)) \frac{((r-1)!)^c}{M^{c(r-1)}}.
\]
\end{lemma}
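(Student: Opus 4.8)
The plan is to compute the probability directly by counting partitions. Recall from~\eqref{Lambda} that $|\Lambda_r(\kvec)| = M!/((M/r)!\,(r!)^{M/r})$. The probability we seek is the ratio of the number of partitions containing all of $U_1,\ldots,U_c$ as parts to this total count. First I would count the favourable partitions: once the $c$ parts $U_1,\ldots,U_c$ are fixed and used, the remaining $M-cr$ points must be partitioned into $(M-cr)/r = M/r - c$ parts of size $r$. Since the $U_j$ are disjoint $r$-subsets, the number of such completions is exactly $|\Lambda_r(\kvec')|$ for the reduced system, namely
\[
 \frac{(M-cr)!}{(M/r-c)!\,(r!)^{M/r-c}}.
\]

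Next I would take the ratio and simplify. Dividing the favourable count by $|\Lambda_r(\kvec)|$ gives
\[
 \frac{(M-cr)!}{M!}\cdot\frac{(M/r)!}{(M/r-c)!}\cdot (r!)^{c}.
\]
The middle factor is the falling factorial $(M/r)_c$, which equals $(M/r)^c\,(1+O(c^2/M))$, and the leading factor is $1/(M)_{cr}$, which equals $M^{-cr}(1+O((cr)^2/M))$. Since $r$ is a fixed constant and $c=o(M^{1/2})$, both error terms are $o(1)$. Multiplying out, I get
\[
 \frac{(r!)^c\,(M/r)^c}{M^{cr}}\,(1+o(1))
 = \frac{(r!)^c}{r^c\,M^{c(r-1)}}\,(1+o(1))
 = \frac{((r-1)!)^c}{M^{c(r-1)}}\,(1+o(1)),
\]
using $r!/r = (r-1)!$. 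This is exactly the claimed expression.

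The only delicate point is controlling the relative error uniformly, since $c$ is allowed to grow with $M$. The main obstacle is therefore bounding the two falling-factorial corrections: I need $(M/r)_c/(M/r)^c = 1+O(c^2/M)$ and $(M)_{cr}/M^{cr} = 1+O(c^2/M)$ (absorbing the fixed constant $r$), which follow from the standard estimate $(N)_m = N^m\exp(O(m^2/N))$ valid when $m=o(N)$; here $m\le cr = o(M^{1/2})$ and $N$ is of order $M$, so $m^2/N = O(c^2/M) = o(1)$. I would verify that these estimates hold with the stated hypothesis $c=o(M^{1/2})$ and that $r$ being a fixed constant lets me treat all $r$-dependent factors as absorbed into the $o(1)$ or carried exactly. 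Once these elementary asymptotics are in place, the result follows immediately from the ratio computation above; no switching or second-moment argument is needed for this lemma.
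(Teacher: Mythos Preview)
Your proposal is correct and follows essentially the same approach as the paper: both compute the probability as $\dfrac{(r!)^c\,(M/r)_c}{(M)_{rc}}$ and then estimate the two falling factorials to extract the main term $((r-1)!)^c/M^{c(r-1)}$ with a multiplicative error of $\exp(O(r^2c^2/M)) = 1+o(1)$. The only cosmetic difference is that the paper writes out the logarithms of the two falling factorials explicitly rather than invoking the packaged estimate $(N)_m = N^m\exp(O(m^2/N))$.
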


\begin{proof}
Using (\ref{Lambda}), the required probability is
\begin{align*}
\frac{r!^c\, (M/r)_c}{(M)_{rc}} &=  \frac{(r-1)!^c}{M^{(r-1)c}}\,  
           \exp\left( -\sum_{j=0}^{rc-1} \,\log(1 - j/M) + \sum_{i=0}^{c-1}\, \log(1-ri/M) \right)\\
        &= \frac{(r-1)!^c}{M^{(r-1)c}}\,  \exp\left( O\left(\frac{r^2 c^2}{M}\right)\right).
\end{align*}
But $r^2c^2 = o(M)$ by assumption, which completes the proof.
\end{proof}

Let
\[
  N = \max\{ \lceil \log M\rceil,\, \lceil 9(r-1) M_2/M\rceil\}.
\]
Now define $\Lambda_r^+(\kvec)$ to
be the set of partitions $Q\in \Lambda_r(\kvec)$ which satisfy
the following properties:
\begin{enumerate}
\item[(i)]  For each part $U\in Q$ we have $|U\cap B_i|\leq 2$
for $i=1,\ldots, n$.
\item[(ii)] For each part $U\in Q$ there is at most one
$i\in \{ 1,\ldots, n\}$ with $|U\cap B_i|=2$.
\item[(iii)]  
For each pair $(U_1,U_2)$ of distinct parts in $Q$, the 
intersection $e_1\cap e_2$ of the corresponding edges contains at
most 2 vertices.
(It is possible that $e_1\cap e_2$ consists of a loop.)
\item[(iv)] There are at most $N$ parts which contain loops. 
\end{enumerate}

Note in particular that whenever $r\geq 3$,
property (iii) implies that $G(Q)$ has no repeated edges. 

\begin{lemma}
Under the assumptions of Theorem~\ref{main}, we have
\[ \frac{|\Lambda_r^+(\kvec)|}{|\Lambda_r(\kvec)|} = 1 + O(\kmax^{3}/M).
\]
\label{pr-simple}
\end{lemma}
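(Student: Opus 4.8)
The plan is to show that the complementary event---that a uniformly random $Q\in\Lambda_r(\kvec)$ violates at least one of properties (i)--(iv)---has probability $O(\kmax^3/M)$. By a union bound, it suffices to bound the probability of each of the four ``bad'' events separately and show that each is $O(\kmax^3/M)$. The central tool throughout will be Lemma~\ref{c-parts}, together with the elementary bounds $M_t\le \kmax M_{t-1}$ and $M_2\le \kmax M$ recorded in the excerpt. First I would set up a generic counting scheme: to bound the expected number of parts (or pairs of parts) exhibiting a given local defect, I would sum over the choices of vertices and points involved, multiply by the probability (from Lemma~\ref{c-parts}) that a prescribed set of $c$ parts occurs in $Q$, and then use Markov's inequality to pass from expectation to probability.

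For properties (i) and (ii), the relevant defects are a single part containing three points from one cell, or a single part containing two points from each of two distinct cells. I would estimate the expected number of such parts. For (i), choosing a cell $B_i$ and three points in it contributes a factor of order $\sum_i (k_i)_3 = M_3\le \kmax^2 M_2\le \kmax^3 M$ for the vertex/point choices, while the probability that a designated $r$-set lies in $Q$ is $O(M^{-(r-1)})$ with one extra point free; balancing point choices against the $M^{-(r-1)}$ factor yields a bound of order $M_3/M^2 = O(\kmax^2 M_2/M^2)\le O(\kmax^3/M)$, using $M_2\le\kmax M$. The computation for (ii) is analogous, governed by $M_2^2/M^2$, which is likewise $O(\kmax^2/M)\le O(\kmax^3/M)$ since $M_2\le\kmax M$. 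For property (iii), a defect is a pair of distinct parts whose corresponding edges share three or more vertices (or meet in a more complicated loop configuration); here I would sum over the shared vertices and the two parts, obtaining a bound dominated by a term like $M_2^3/M^3$ or $M_3 M_2/M^3$, each of which reduces to $O(\kmax^3/M)$ after applying $M_t\le\kmax M_{t-1}$.

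Property (iv) is the one I expect to be the main obstacle, since it concerns a \emph{global} count---the number of parts containing a loop---rather than a single bad local configuration, and the threshold $N$ grows like $\max\{\log M,\ M_2/M\}$. The expected number of loop-containing parts is of order $M_2/M$ (one chooses a cell and two points in it, giving $\sum_i(k_i)_2=M_2$, times the $O(1/M)$ cost of binding those two points into a common part with $r-2$ free points). Since $N\ge 9(r-1)M_2/M$ is chosen a constant factor above this mean, a plain Markov bound only gives a constant, not the required $O(\kmax^3/M)$. The remedy is a concentration argument: I would either bound a suitable factorial moment of the number of loop-parts using Lemma~\ref{c-parts} applied to $c$ disjoint loop-containing parts, or invoke a Poisson-type tail estimate, to show that exceeding $N$ loop-parts has probability decaying faster than any fixed power---in particular, the $\lceil\log M\rceil$ branch of $N$ forces a tail of order $M^{-\Omega(1)}$, comfortably $O(\kmax^3/M)$ under the hypothesis $\kmax^3=o(M)$. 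Assembling the four bounds and subtracting from $1$ then gives $|\Lambda_r^+(\kvec)|/|\Lambda_r(\kvec)| = 1 + O(\kmax^3/M)$.
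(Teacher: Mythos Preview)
Your proposal follows exactly the paper's approach: a union bound over failures of (i)--(iv), first-moment estimates via Lemma~\ref{c-parts} and Markov for (i)--(iii), and for (iv) a factorial-moment argument applying Lemma~\ref{c-parts} with $c=N+1$ loop-containing parts, using $N\ge\log M$ to drive the tail down to $o(1/M)$.  The only issues are arithmetic slips in your denominators: for (ii) the expected count is $O(M_2^2 M^{r-4}/M^{r-1})=O(M_2^2/M^3)$, not $M_2^2/M^2$, and for (iii) it is $O(M_2^3/M^4)$ (plus a loop term $O(M_2 M_4/M^4)$), not $M_2^3/M^3$; with the exponents you wrote the claimed reductions to $O(\kmax^2/M)$ and $O(\kmax^3/M)$ do not follow, but with the correct ones they do.
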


\begin{proof}
Consider $Q\in\Lambda_r(\kvec)$ chosen uniformly at random. 

(i) The expected number of parts in $Q$ which contain
three or more points from the same cell is 
\[ O\left(\frac{M_3 M^{r-3}}{M^{r-1}}\right) = O(\kmax^2/M),\]
using Lemma~\ref{c-parts}.
Hence, the probability that property (i) fails to hold is also $O(\kmax^2/M)$.

(ii)
Similarly, the expected number of parts in $Q$
which contain two loops (where each loop is from a distinct cell) is
\[ O\left(\frac{M_2^2 M^{r-4}}{M^{r-1}}\right) = O(\kmax^2/M).\]

(iii)
Using Lemma~\ref{c-parts},
the expected number of ordered pairs of distinct parts $(U_1,U_2)$ which 
give rise to edges $e_1, e_2$ such that $|e_1\cap e_2| \geq  3$ is 
\[
O\left(\frac{M_2^3\, M^{2(r-3)} + M_2 M_4 M^{2(r-3)}}{M^{2(r-1)}}\right) 
= O(\kmax^3/M).
\]
(Here the first term arises if $e_1\cap e_2$ does not contain a loop
while the second term covers the possibility that $e_1\cap e_2$ contains
a loop. By (i) we can assume that $e_1\cap e_2$ contains at least two distinct
vertices.)

(iv)  Let $\ell=N + 1$.  We bound the
expected number of sets $\{ U_1,\ldots, U_\ell\}$ of $\ell$ parts
which each contain a loop.  Given  $(U_1,\ldots, U_{i-1})$,
there are at most $M_2 M^{r-2}/(2(r-2)!)$ choices for $U_i$.
Hence there are 
\[ O\left(\frac{1}{\ell!}\, \left(\frac{M_2 M^{r-2}}{2(r-2)!}\right)^{\ell}
  \right)
\]
possible sets $\{ U_1,\ldots, U_\ell\}$ of parts which each contain
a loop.  Now 
\[ \ell = O(N) = O(\kmax +  \log M) = o(M^{1/2}),
\]
by definition of $N$.  Hence Lemma~\ref{c-parts} applies, and we conclude
that
the expected number of sets of $\ell = N + 1$ parts which each contain
a loop is
\[ O\left(\frac{1}{\ell!}\, \left(\frac{(r-1)M_2}{2M}\right)^\ell\right) 
  = O\left(\left(\frac{e(r-1)M_2}{2\ell M}\right)^\ell\right)
    = O\left((e/18)^{\log M}\right) 
       = o(1/M),\]
completing the proof.
\end{proof}

In Section~\ref{s:switchings} we will calculate $|\Lambda^+_{r}(\kvec)|$
by analysing switchings which make local changes to a partition to reduce
(or increase) the number of loops by precisely 1.

\section{The switchings}\label{s:switchings}

For a given nonnegative integer $\ell$, let $\mathcal{C}_\ell$
be the set of partitions $Q\in \Lambda_r^+(\kvec)$  with exactly $\ell$ 
parts which contain a loop.
Then partitions in $\mathcal{C}_0$ give rise to hypergraphs in $\Hrk$. 
Now $\mathcal{C}_0$ is nonempty whenever $r$ divides $M$, and we
restrict ourselves to this situation.  Hence 
it follows from Lemma~\ref{pr-simple} that
\begin{equation} \frac{1}{P_r(\kvec)} = 
  \bigl(1 + O(\kmax^{3}/M)\bigr)\,  \sum_{\ell = 0}^{N}\,
   \frac{|\mathcal{C}_\ell|}{|\mathcal{C}_0|}.
\label{a=0}
\end{equation}
We estimate the above sum using a switching designed
to remove loops.

An $\ell$-\emph{switching} in a partition $Q$
is specified by a 4-tuple $(x_1,x_2,y_1,y_2)$ of points
where $x_1$ belongs to the part $U$, and $y_j$ belongs to the part
$W_j$ for $j=1,2$, such that:
\begin{itemize}
\item $U$, $W_1$ and $W_2$ are distinct parts of $Q$, 
\item $y_1$ and $y_2$ belong to distinct cells, and 
\item $U$ contains a loop $\{ x_1, x_2\}$ (so in particular, $x_1$ and $x_2$ belong to
the same cell).
\end{itemize}

The $\ell$-switching maps $Q$ to the partition $Q'$ defined by
\begin{equation}
\label{QQ'-loops}
 Q' = \bigl(Q - \{ U, W_1, W_2\}\bigr) \cup 
                       \{\widehat{U}, \widehat{W}_1, \widehat{W}_2\} 
\end{equation}
where 
\[ \widehat{U} = \bigl( U - \{ x_1,x_2\}\bigr) \cup \{ y_1,y_2\},\quad
  \widehat{W}_1 = \bigl(W_1 - \{ y_1\}\bigr)\cup \{ x_1\},\quad
  \widehat{W}_2 = \bigl(W_2 - \{ y_2\}\bigr) \cup \{ x_2\}.
\] 
This operation is illustrated in Figure~\ref{f:l-switch}.
It is the same operation used by Dudek et al.~\cite{DFRS},
but we use a somewhat different approach when analysing the switching.

\begin{figure}[ht]
\begin{center}
\unitlength=1cm
\begin{picture}(15,6)(0,0)
\put(0.8,0.0){
    \includegraphics[scale=0.7]{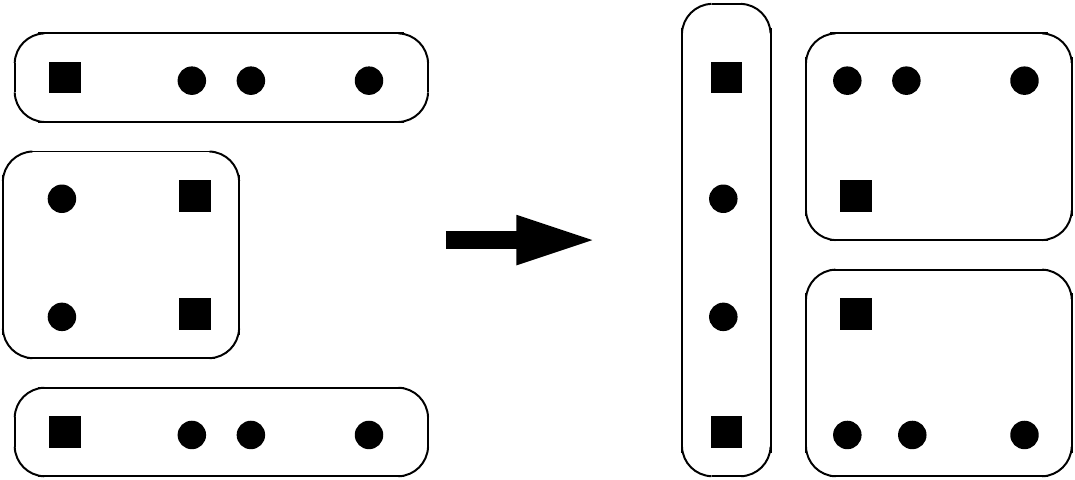}
        }
\put(0.4,4.7){$W_1$}
\put(0.4,2.6){$U$}
\put(1.6,2.5){$\vdots$}
\put(0.4,0.5){$W_2$}
\put(14.0,3.7){$\widehat{W}_1$}
\put(12.2,4.65){$\cdots$}
\put(14.0,1.2){$\widehat{W}_2$}
\put(12.2,0.4){$\cdots$}
\put(8.4,4.7){$\widehat{U}$}
\put(9.45,2.5){$\vdots$}
\put(2.5,3.3){$x_1$}
\put(2.5,1.9){$x_2$}
\put(11.4,3.3){$x_1$}
\put(11.4,1.9){$x_2$}
\put(2.1,4.7){$y_1$}
\put(4.4,4.65){$\cdots$}
\put(2.1,0.5){$y_2$}
\put(4.4,0.4){$\cdots$}
\put(9.4,4.2){$y_1$}
\put(9.4,1.0){$y_2$}
\end{picture}
\caption{An $\ell$-switching}
\label{f:l-switch}
\end{center}
\end{figure}
Let $e$ be the edge of $G(Q)$  corresponding to $U$, and
let $f_j$ be the edge of $G(Q)$ corresponding to $W_j$, for $j=1,2$.
Similarly, let $\widehat{e}$ be the edge of $G(Q')$ corresponding
to $\widehat{U}$, and let $\widehat{f}_j$ be the edge of $G(Q')$
corresponding to $\widehat{W}_j$ for $j=1,2$.

Given $Q\in\mathcal{C}_\ell$, we say that the $\ell$-switching
specified by the 4-tuple of points $(x_1,x_2,y_1,y_2)$ 
is \emph{legal for $Q$}  if the resulting partition $Q'$ belongs to $\mathcal{C}_{\ell-1}$,
and otherwise we say that the switching is \emph{illegal for $Q$}.

\begin{lemma}
With notation as above, if the $\ell$-switching $(x_1,x_2,y_1,y_2)$ is
illegal for $Q$ then at least one of the following conditions must hold:
\begin{itemize}
\item[\emph{(I)}] At least one of $W_1$, $W_2$ contains a loop.
\item[\emph{(II)}] $e$, $f_1$ and $f_2$ are not pairwise disjoint.
\item[\emph{(III)}] Some edge of $G(Q)\setminus \{ e,\, f_1,\, f_2\}$ intersects
both $e$ and  $f_j$, for some $j\in \{1,2\}$.
\end{itemize}
\label{forward-legal}
\end{lemma}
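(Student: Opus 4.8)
The plan is to prove the contrapositive: assuming that none of (I), (II) or (III) holds, I would show that the $\ell$-switching is legal, i.e.\ that $Q'\in\mathcal{C}_{\ell-1}$. First observe that the switching merely reassigns the four points $x_1,x_2,y_1,y_2$ among the three parts $U,W_1,W_2$ while preserving every part size, so $Q'$ is automatically a partition in $\Lambda_r(\kvec)$; all the work lies in verifying that $Q'$ still satisfies properties (i)--(iv) of $\Lambda_r^+(\kvec)$ and has exactly $\ell-1$ parts containing a loop. Throughout I write the ``$x$-cell'' for the common cell of $x_1$ and $x_2$, and the ``$y_j$-cell'' for the cell of $y_j$.

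The first substantive step uses the negations of (I) and (II). Since $Q\in\mathcal{C}_\ell\subseteq\Lambda_r^+(\kvec)$, property (ii) guarantees that the only loop of $U$ is $\{x_1,x_2\}$, sitting at the $x$-cell. I would then record, for each modified part, exactly which cells gain or lose a point. Because $e,f_1,f_2$ are pairwise disjoint (negation of (II)) and $W_1,W_2$ are loop-free (negation of (I)), one checks that deleting $x_1,x_2$ removes the $x$-cell from $U$ entirely, that the $y_1$- and $y_2$-cells are absent from $U\setminus\{x_1,x_2\}$ and are distinct, and symmetrically that the $x$-cell is absent from $W_j$ while the $y_j$-cell leaves $\widehat{W}_j$ completely. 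Hence $\widehat U,\widehat W_1,\widehat W_2$ are all loop-free. This yields properties (i) and (ii) for $Q'$ at once, shows that the number of loop-parts drops by exactly one (so property (iv) holds and $Q'$ indeed lies in $\mathcal{C}_{\ell-1}$ rather than some other $\mathcal{C}_m$), and reduces everything to property (iii).

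Property (iii) is the crux, and I would verify it only for pairs of parts of $Q'$ in which at least one part was modified. For two modified parts the required bound is automatic from the disjointness of $e,f_1,f_2$; for instance $\widehat f_1$ and $\widehat f_2$ share only the newly inserted $x$-cell. The delicate case is a modified edge paired with an unchanged edge $g$, where $g\notin\{e,f_1,f_2\}$. Using the bookkeeping above, $\widehat f_j$ arises from $f_j$ by deleting the $y_j$-cell and inserting the $x$-cell, so $|\widehat f_j\cap g|$ equals $|f_j\cap g|$, lowered by one if $g$ meets the $y_j$-cell and raised by one if $g$ meets the $x$-cell. As $|f_j\cap g|\le 2$ by property (iii) for $Q$, an intersection of size $3$ forces $g$ to contain the $x$-cell and to meet $f_j$; but the $x$-cell lies in $e$, so $g$ then meets both $e$ and $f_j$, which is exactly condition (III). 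An analogous count for $\widehat e$ against $g$ (obtained from $e$ by deleting the $x$-cell and inserting the $y_1$- and $y_2$-cells) shows that an intersection of size at least $3$ again forces $g$ to meet $e$ together with some $f_j$. Thus, under the negation of (III), property (iii) survives, and the contrapositive is complete.

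I expect this final step to be the main obstacle: it requires careful vertex-set bookkeeping for each modified edge and the observation that any over-large intersection with an outside edge $g$ must be created by the freshly inserted $x$-cell (or $y_j$-cells) meeting $g$, which is precisely what pins the illegality onto condition (III). By contrast, the pairs among the modified parts and the checks of (i), (ii) and (iv) are routine once loop-freeness of $\widehat U,\widehat W_1,\widehat W_2$ has been established.
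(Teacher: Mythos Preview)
Your contrapositive argument is correct and is essentially the same as the paper's proof: both hinge on exactly the same bookkeeping (loop-freeness of $\widehat U,\widehat W_1,\widehat W_2$ from the negations of (I) and (II), and the observation that any over-large intersection with an outside edge $g$ forces $g$ to meet $e$ and some $f_j$, which is condition (III)). The paper proceeds in the direct direction, analysing the ways $Q'$ could fail to lie in $\mathcal{C}_{\ell-1}$, but the substance of each case matches yours.
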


\begin{proof}
Given $Q\in\mathcal{C}_\ell$, suppose that the 4-tuple $(x_1,x_2,y_1,y_2)$ 
specifies
an $\ell$-switching in $Q$ such that the resulting partition $Q'$ 
does not belong to $\mathcal{C}_{\ell-1}$.

It could be that $Q'\in\Lambda_r^+(\kvec)$ but that $Q'$
has strictly more than $\ell-1$ parts which contain a loop.  
Here the $\ell$-switching has (accidently)
introduced at least one new loop.  But this implies that (II) holds,
since we know that $y_1$ and $y_2$ do not belong to the same cell.

Next, suppose that $Q'\in\Lambda_r^+(\kvec)$ but that $Q'$
has at most $\ell-2$ parts which contain a loop.  
This means that the $\ell$-switching has
removed more than one loop. Then property (I) must hold: the point $y_j$
must have been involved in a loop in $W_j$ for some $j\in\{ 1,2\}$.

It remains to consider the case that $Q'\not\in\Lambda_r^+(\kvec)$.
Then at least one of the properties (i)--(iv) used to 
define $\Lambda_r^+(\kvec)$ no
longer holds for $Q'$.  Arguing as above, if (i), (ii) or (iv) fails
then we have introduced at least one loop, or increased the multiplicity of
a vertex in some edge from 2 to at least 3.  This implies that (I) or (II)
holds, using arguments similar to those above.

Finally, suppose that (iii) fails for $Q'$.  Then $G(Q')$ has
a pair of edges which intersect in at least 3 vertices.  
We say that this pair of edges has \emph{large intersection}.
At least one of the new edges $\widehat{e}$, $\widehat{f}_1$, $\widehat{f}_2$ 
must be involved in any such pair, since $Q\in\Lambda_r^+(\kvec)$.

If $\widehat{f}_1$ and $\widehat{f}_2$
have large intersection then $f_1$ and $f_2$ are not 
disjoint, which shows that (II) holds.  Similarly,
if $\widehat{e}$ and $\widehat{f}_j$ have large intersection
for some $j\in \{ 1,2\}$ then $e$ and $f_j$ are not disjoint, and (II) holds.
Now suppose that an edge $e'\in G(Q')\setminus \{ \widehat{e},\, \widehat{f}_1,\,
 \widehat{f}_2\}$ has large intersection with one of the new edges.  
Note that $e'$ is also an edge of $G(Q)\setminus \{ e,f_1,f_2\}$.
\begin{itemize}
\item
If $e'$ has large intersection with $\widehat{f}_j$ for some $j\in \{ 1,2\}$
 then $e'$ must
contain the vertex corresponding to the point $x_j$, or else $e'$ and $f_j$
would have large intersection in $G(Q)$, contradicting the fact that $Q\in\Lambda_r^+(\kvec)$.
Furthermore, $e'\cap \widehat{f}_j$ contains at least one other vertex, 
corresponding to a point in $\widehat{W}_j\setminus \{ x_j\} = W_j \setminus \{ y_j\}$.  
Hence $e'$
intersects both $e$ and $f_j$ in $G(Q)$, showing that (III) holds.
\item 
If $e'$ has large intersection with $\widehat{e}$ then $e'$ must
contain the vertex corresponding to $y_j$ for some $j\in \{ 1,2\}$ (perhaps both),
otherwise $e'$ and $e$ would have large intersection in $G(Q)$, a contradiction.
Even if $e'$ contains both of these vertices, it must still contain a vertex
corresponding to a point in $\widehat{U} \setminus \{ y_1,y_2\} = U \setminus \{ x_1,x_2\}$.
Hence $e'$ intersects both $f_j$ and $e$ in $G(Q)$ for some $j\in \{ 1,2\}$, which
again proves that (III) holds.
\end{itemize}
This completes the proof.  
\end{proof}

A \emph{reverse $\ell$-switching} in a given partition $Q'$
is the reverse of an $\ell$-switching.
It is described by a 4-tuple $(x_1,x_2,y_1,y_2)$ of points,
where $\widehat{W}_j$
is the part of $Q'$ containing $x_j$, for $j=1,2$, and $y_1, y_2$
are distinct points in the part $\widehat{U}$ of $Q'$, such that
\begin{itemize}
\item $\widehat{U}$, $\widehat{W}_1$ and $\widehat{W}_2$ are distinct
parts of $Q'$,
\item $x_1$ and $x_2$ belong to the same cell, and
\item $y_1$ and $y_2$ belong to distinct cells.
\end{itemize}
This reverse $\ell$-switching acting on $Q'$ produces the partition $Q$
defined by (\ref{QQ'-loops}),   
as depicted in Figure~\ref{f:l-switch}
by following the arrow in reverse.  
Given $Q'\in\mathcal{C}_{\ell-1}$, we say that the
reverse $\ell$-switching specified by $(x_1,x_2,y_1,y_2)$
is \emph{legal for $Q'$} if the resulting partition $Q$ belongs
to $\mathcal{C}_\ell$, and otherwise we say that the switching is
\emph{illegal for $Q'$}.
For completeness we give the full proof of the following, though
it is very similar to the proof of Lemma~\ref{forward-legal}.

\begin{lemma}
\label{legal-reverse}
With notation as above, if the reverse $\ell$-switching
specified by $(x_1,x_2,y_1,y_2)$ is illegal for $Q'\in\mathcal{C}_{\ell-1}$
then at least one of the following conditions must hold:
\begin{itemize}
\item[\emph{(I${}'$)}]
At least one of $\widehat{U}$, $\widehat{W}_1$, $\widehat{W}_2$ contains a loop.
\item[\emph{(II${}'$)}]  
$\widehat{e}\cap\widehat{f}_j\neq \emptyset$ for some $j\in \{ 1,2\}$.
\item[\emph{(III${}'$)}] Some edge of $G(Q')\setminus \{ \widehat{e}, \widehat{f}_1, \widehat{f}_2\}$
intersects both $\widehat{e}$ and $\widehat{f}_j$ for some $j\in\{ 1,2\}$.
\end{itemize}
\end{lemma}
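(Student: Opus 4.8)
The plan is to mirror the case analysis in the proof of Lemma~\ref{forward-legal}, now run backwards: I classify the ways in which $Q$ can fail to lie in $\mathcal{C}_\ell$ according to which defining condition is violated (one of (i)--(iv) for $\Lambda_r^+(\kvec)$, or the requirement that $Q$ have exactly $\ell$ looped parts), and in each case I trace the affected cells back through the switch to exhibit one of (I$'$), (II$'$), (III$'$). Throughout I write $c$ for the common cell of $x_1,x_2$ and $a_j$ for the cell of $y_j$; since $y_1,y_2$ lie in distinct cells, $a_1\neq a_2$. The two facts that drive every conversion are that $a_j$ is a vertex of $\widehat{e}$ (because $y_j\in\widehat{U}$) and that $c$ is a vertex of both $\widehat{f}_1$ and $\widehat{f}_2$ (because $x_j\in\widehat{W}_j$).

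First I would dispose of the loop-count discrepancies and the failures of (i), (ii) and (iv). The reverse switching is designed to create exactly one new loop, namely $\{x_1,x_2\}\subseteq U$. If $Q$ instead has too many looped parts, a part with three points in one cell, or a part with two loops, then a loop was created beyond the intended one, or a cell-multiplicity in one of $U,W_1,W_2$ was pushed above $2$. Inspecting these three altered parts one at a time, any such anomaly that does not already contradict $Q'\in\Lambda_r^+(\kvec)$ either localises to a loop already present in one of $\widehat{U},\widehat{W}_1,\widehat{W}_2$, giving (I$'$), or it localises to a cell shared between $\widehat{U}$ and some $\widehat{W}_j$ -- which, because $a_j\in\widehat{e}$ and $c\in\widehat{f}_j$, means $\widehat{e}\cap\widehat{f}_j\neq\emptyset$, giving (II$'$). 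Conversely, if $Q$ has fewer than $\ell$ looped parts, a loop must have been destroyed, and this can only happen if one of $\widehat{U},\widehat{W}_1,\widehat{W}_2$ carried a loop, i.e.\ (I$'$).

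The substantive case is the failure of (iii): $G(Q)$ contains a pair of edges meeting in at least three vertices (a \emph{large intersection}), and since $Q'\in\Lambda_r^+(\kvec)$ at least one member of the pair is one of the new edges $e,f_1,f_2$. The engine is a bounded-change observation: from $W_j=(\widehat{W}_j-\{x_j\})\cup\{y_j\}$ and $U=(\widehat{U}-\{y_1,y_2\})\cup\{x_1,x_2\}$, the vertex set of $f_j$ is contained in that of $\widehat{f}_j$ together with the single extra vertex $a_j$, and the vertex set of $e$ is contained in that of $\widehat{e}$ together with the single extra vertex $c$. Hence $|e\cap f_j|\le|\widehat{e}\cap\widehat{f}_j|+2$, $|f_1\cap f_2|\le|\widehat{f}_1\cap\widehat{f}_2|+2$, and $|e'\cap f_j|\le|e'\cap\widehat{f}_j|+1$, $|e'\cap e|\le|e'\cap\widehat{e}|+1$ for any external edge $e'$. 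When both members of the large-intersection pair are switched edges, these bounds (together with $a_1\neq a_2$ for the $f_1,f_2$ pairing, and $a_j\in\widehat{e}$, $c\in\widehat{f}_j$) force $\widehat{e}\cap\widehat{f}_j\neq\emptyset$, which is (II$'$). When the partner is an external edge $e'$, which is also an edge of $G(Q')\setminus\{\widehat{e},\widehat{f}_1,\widehat{f}_2\}$, the bound forces $|e'\cap\widehat{f}_j|=2$ (resp.\ $|e'\cap\widehat{e}|=2$) and forces $e'$ to contain the extra vertex $a_j$ (resp.\ $c$); since $a_j\in\widehat{e}$ and $c\in\widehat{f}_1\cap\widehat{f}_2$, this means $e'$ meets both $\widehat{e}$ and some $\widehat{f}_j$, which is (III$'$).

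I expect the main obstacle to be the careful bookkeeping in this last case: one must verify that the only vertex a switched edge can gain relative to its pre-image is the designated point ($a_j$ for $f_j$, $c$ for $e$), and then correctly match a new common vertex on $f_j$ to an intersection with $\widehat{e}$, and a new common vertex on $e$ to an intersection with $\widehat{f}_j$, so that the external-edge subcase lands squarely in (III$'$) rather than being lost. The loop-count and cell-multiplicity cases are routine once the same cell-tracking is in place, and are essentially the reverse-direction restatements of the corresponding paragraphs in the proof of Lemma~\ref{forward-legal}.
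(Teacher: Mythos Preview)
Your plan is correct and follows the same case analysis as the paper's proof: loop-count discrepancies and failures of (i), (ii), (iv) are traced to (I${}'$) or (II${}'$), and failures of (iii) are split into switched-edge pairs versus an external edge, yielding (II${}'$) or (III${}'$) respectively. Your systematic containments $f_j\subseteq\widehat f_j\cup\{a_j\}$ and $e\subseteq\widehat e\cup\{c\}$ actually give a slightly more careful treatment of the $(f_1,f_2)$ subcase than the paper, which asserts $f_1\cap f_2\subseteq\widehat f_1\cap\widehat f_2$; that containment need not hold literally, but your bound correctly shows that a large $f_1\cap f_2$ forces some $a_j\in\widehat f_{3-j}\cap\widehat e$ and hence (II${}'$).
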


\begin{proof}
Fix $Q'\in \mathcal{C}_{\ell-1}$ and let $(x_1,x_2,y_1,y_2)$ describe
an reverse $\ell$-switching such that the resulting partition $Q$
does not belong to $\mathcal{C}_\ell$.

If $Q\in\Lambda_r^+(\kvec)$ but $Q$ has more than $\ell$ parts
which contain loops then an extra loop has been unintentionally
introduced. In this case, either $\widehat{W}_j\setminus \{ x_j\}$
contains a point from the same cell as $y_j$, or $\widehat{U}\setminus \{ y_1,y_2\}$
contains a point from the same cell as $x_j$, for some $j\in\{ 1,2\}$.
In either case we have $\widehat{e}\cap \widehat{f}_j\neq\emptyset$,
so (II${}'$) holds.
Next, suppose that $Q\in\Lambda_r^+(\kvec)$ but that $Q$ has at
most $\ell-1$ parts which contain a loop.  Then the reverse
switching has removed at least one loop, which implies that
(I${}'$) holds.

Now suppose that $Q\not\in\Lambda_r^+(\kvec)$.  Then one of
the properties (i)--(iv) fail for $Q$. If (i), (ii) or (iv) fail then
arguing as above we see that (I${}'$) or (II${}'$) holds.
Now suppose that (iii) fails.  Then
some edge of $G(Q)$ has large intersection with one of $e, f_1, f_2$
(recalling that terminology from the proof of Lemma~\ref{forward-legal}).
Now $f_1$ and $f_2$ cannot have large intersection, since their intersection
is contained in the intersection of $\widehat{f}_1$ and $\widehat{f}_2$,
and $Q'\in\Lambda_r^+(\kvec)$.  If $e$ and $f_j$ have large intersection for
some $j\in\{ 1,2\}$ then either this intersection contains the vertex corresponding
to $x_j$ (and hence $\widehat{W}_j$ contains a loop), or 
the intersection contains the vertex corresponding to $y_j$
(and hence $\widehat{U}$ contains a loop), or
$\widehat{e}\cap\widehat{f}_j\neq \emptyset$. 
Again (I${}'$) or (II${}'$) hold.

Finally, suppose that the large intersection involves an edge $e'\in G(Q)\setminus \{ e,f_1,f_2\}$.
Then $e'$ also belongs to $G(Q')\setminus \{ \widehat{e},\, \widehat{f}_1,\, \widehat{f}_2\}$.
If $e'$ has large intersection with $e$ in $G(Q)$ then $e'$ contains the
vertex corresponding to the point $x_j$, for some $j\in\{1,2\}$ (or else $e'$ and $\widehat{e}$ have
large overlap in $G(Q')$, a contradiction), and $e'$ contains at
least one vertex corresponding to a point of $U\setminus \{ x_1,x_2\} = \widehat{U}\setminus \{ y_1,y_2\}$.
Therefore $e'$ overlaps both $\widehat{e}$ and $\widehat{f}_j$, so (III${}'$) holds.
Similarly, if $e'$ has large intersection with $\widehat{f}_j$ for some $j\in\{1,2\}$
then $e'$ contains the vertex corresponding to $y_j$ (or else $e'\cap \widehat{f}_j$ is large
in $G(Q')$, a contradiction), and $e'$ contains at least one vertex corresponding to
a point in $W_j \setminus \{ y_j\} = \widehat{W}_j\setminus \{ x_j\}$.  Again,
$e'$ overlaps both $\widehat{e}$ and $\widehat{f}_j$, proving that (III${}'$) holds, as required.
\end{proof}

Next we analyse these switchings to find a relationship between the 
sizes of $\mathcal{C}_\ell$ and $\mathcal{C}_{\ell-1}$.

\begin{lemma}
Assume that the conditions of Theorem~\ref{main} hold and let 
$\ell'$ be the first value of
$\ell\leq N$ such that $C_{\ell}=\emptyset$, or $\ell'=N+1$ if no
such value exists.  Then 
\[
  |\mathcal{C}_\ell| = |\mathcal{C}_{\ell-1}|\, 
    \frac{(r-1)M_2}{2\ell M}\, \left( 1 + O\left( 
    \frac{\kmax^{3} + \ell\, \kmax}{M_2} \right)\right)
\]
uniformly for $1\leq \ell <  \ell'$. 
\label{ratio} 
\end{lemma}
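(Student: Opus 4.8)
The plan is to estimate the ratio $|\mathcal{C}_\ell|/|\mathcal{C}_{\ell-1}|$ by double-counting legal $\ell$-switchings. I count the set
\[ S = \{ (Q, Q') : Q\in\mathcal{C}_\ell,\ Q'\in\mathcal{C}_{\ell-1},\ \text{some legal $\ell$-switching maps $Q$ to $Q'$}\}\]
in two ways. Summing over $Q\in\mathcal{C}_\ell$, let $f(Q)$ denote the number of legal $\ell$-switchings applicable to $Q$; summing over $Q'\in\mathcal{C}_{\ell-1}$, let $b(Q')$ denote the number of legal reverse $\ell$-switchings applicable to $Q'$. Then $|S| = \sum_{Q\in\mathcal{C}_\ell} f(Q) = \sum_{Q'\in\mathcal{C}_{\ell-1}} b(Q')$, so the desired ratio will follow once I show $f(Q)$ and $b(Q')$ are each sharply concentrated around explicit means.

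First I would compute the \emph{total} (not necessarily legal) count of forward switchings from a fixed $Q\in\mathcal{C}_\ell$. A switching is specified by choosing a loop $\{x_1,x_2\}$ in one of the $\ell$ loop-parts, and then choosing an ordered pair $(y_1,y_2)$ of points in distinct cells lying in two further distinct parts $W_1,W_2$. The number of loops is $\tfrac12\sum_i |U\cap B_i|(|U\cap B_i|-1)$ summed over loop-parts; by properties (i)--(ii) each loop-part contributes exactly one loop from a single cell, so there are exactly $\ell$ loops, but to get the right $M_2$ weighting I count ordered loop-point pairs, which is governed by $M_2 = \sum_i (k_i)_2$. The number of ways to pick the ordered target pair $(y_1,y_2)$ is $\approx M^2$ up to lower-order corrections for the distinct-cell and distinct-part constraints. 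Combining, the total number of forward switchings is $\tfrac{M_2}{\text{(normalisation)}}\cdot M^2$-type, and dividing by the symmetry of swapping $(W_1,y_1)\leftrightarrow(W_2,y_2)$ yields a leading term proportional to $(r-1)M_2 M/2$; the $(r-1)$ factor enters because $\widehat U$ receives two new points $y_1,y_2$ and the loop is destroyed, and careful bookkeeping of how many points of each part are eligible produces it. I would then subtract the illegal switchings: by Lemma~\ref{forward-legal} every illegal switching satisfies (I), (II) or (III). I bound each of these. Condition (I) requires $y_j$ to lie in a loop-part, and there are at most $N$ loop-parts each of size $r$, giving $O(\ell\,\kmax\,M)$ illegal switchings relative to the $M_2 M$ scale, i.e.\ a relative error $O(\ell\,\kmax/M_2)$. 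Conditions (II) and (III) force extra coincidences among $e,f_1,f_2$ and the ambient edges; using the sparsity bounds and Lemma~\ref{c-parts}-style first-moment estimates (as already carried out in Lemma~\ref{pr-simple}) these contribute relative error $O(\kmax^3/M_2)$.

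Next I would run the symmetric computation for the reverse switchings from a fixed $Q'\in\mathcal{C}_{\ell-1}$. A reverse switching picks two points $x_1,x_2$ in the same cell but in two distinct parts $\widehat W_1,\widehat W_2$, together with an ordered pair $y_1,y_2$ in a third part $\widehat U$; the new part $U$ acquires the loop $\{x_1,x_2\}$. The number of ways to choose a same-cell pair $(x_1,x_2)$ across parts is $\approx M_2$ (again via $\sum_i (k_i)_2$), the number of ways to choose the third part and the ordered pair $y_1,y_2$ in it gives the $(r-1)$ and an $M$ factor, so the leading term matches the forward count up to the factor $2\ell$: since the resulting $Q$ has $\ell$ loop-parts and we have freedom in which of them is the newly created $U$, the per-$Q'$ count is larger by a factor $\ell$, and this is exactly what produces the $1/\ell$ in the statement. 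The illegal reverse switchings are controlled by Lemma~\ref{legal-reverse}: (I${}'$), (II${}'$), (III${}'$) are bounded by the same sparsity arguments, again yielding relative errors $O((\kmax^3 + \ell\,\kmax)/M_2)$.

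Finally, dividing the two estimates gives
\[ \frac{|\mathcal{C}_\ell|}{|\mathcal{C}_{\ell-1}|} = \frac{\sum_{Q'} b(Q')/|\mathcal{C}_{\ell-1}|}{\sum_{Q} f(Q)/|\mathcal{C}_\ell|} \]
rearranged so that $|\mathcal{C}_\ell|\cdot(\text{mean } f) = |\mathcal{C}_{\ell-1}|\cdot(\text{mean }b)$, and the ratio of the two means is $(r-1)M_2/(2\ell M)$ times $(1+O((\kmax^3+\ell\kmax)/M_2))$, as claimed. \textbf{The main obstacle} I anticipate is the careful verification that $f(Q)$ and $b(Q')$ are genuinely \emph{uniform} across the respective classes, rather than merely having the right average: the leading-order counts must be shown to depend on $Q$ only through global quantities ($M$, $M_2$, $\ell$) up to the stated relative error. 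This is where the defining properties (i)--(iv) of $\Lambda_r^+(\kvec)$ do the real work, since they guarantee that the local structure around any loop is rigid (at most one double-point per part, intersections of size at most $2$), so that the number of forbidden target points is always a lower-order correction independent of the particular $Q$. Getting the error terms to come out as $O((\kmax^3+\ell\kmax)/M_2)$ uniformly in $\ell<\ell'$, rather than with hidden $\ell$-dependence, will require tracking the $\ell\,\kmax$ contribution from condition (I)/(I${}'$) separately from the $\kmax^3$ contribution from (II),(III)/(II${}'$),(III${}'$).
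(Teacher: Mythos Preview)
Your overall strategy---double-counting legal $\ell$-switchings and invoking Lemmas~\ref{forward-legal} and~\ref{legal-reverse} to control illegality---is exactly the paper's approach. But your leading-order computations for the forward and reverse counts are swapped, and this is a genuine error, not a notational slip.

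In the forward direction from $Q\in\mathcal{C}_\ell$, the quantity $M_2$ plays \emph{no role}. By properties (i)--(ii) there are exactly $\ell$ loops, hence $2\ell$ ordered pairs $(x_1,x_2)$; then $(y_1,y_2)$ ranges over $\approx M^2$ ordered pairs of points. There is no symmetry factor to divide by (the 4-tuple is ordered) and no $(r-1)$. The forward count is $2\ell M^2\bigl(1+O((\kmax^2+\ell)/M)\bigr)$. In the reverse direction from $Q'\in\mathcal{C}_{\ell-1}$, this is where $M_2$ enters: $(x_1,x_2)$ is an ordered pair of points from the same cell but in distinct parts, and there are $M_2+O(\kmax)$ such pairs. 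The factor $(r-1)$ arises because, having chosen $y_1\in\widehat{U}$, there are $r-1$ choices for $y_2\in\widehat{U}\setminus\{y_1\}$. There is \emph{no} factor of $\ell$ here---you create one specific loop in one specific part, with no ``freedom in which loop-part is the newly created $U$''. The reverse count is $(r-1)M M_2\bigl(1+O((\kmax^3+\ell\kmax)/M_2)\bigr)$. Dividing gives $|\mathcal{C}_\ell|/|\mathcal{C}_{\ell-1}|=(r-1)M M_2/(2\ell M^2)=(r-1)M_2/(2\ell M)$ with the stated error.

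Your error analysis inherits the same confusion: you normalise illegal forward switchings against ``the $M_2 M$ scale'', but the correct forward scale is $2\ell M^2$, so the relative errors from (II) and (III) are $O(\kmax/M)$ and $O(\kmax^2/M)$. The term $\ell\kmax/M_2$ in the final error actually originates from condition (I${}'$) on the \emph{reverse} side (where $\widehat{W}_j$ contains a loop, contributing $O(\ell\kmax M)$ against the scale $(r-1)MM_2$). You arrive at the right aggregate error bound, but with the contributions misattributed; as written, your forward computation would not produce the correct leading term.
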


\begin{proof}
Fix $\ell\in \{ 1,\ldots, \ell'-1\}$ 
and let $Q\in\mathcal{C}_\ell$ be given.
Define the set $\mathcal{S}$ of all 4-tuples $(x_1,x_2,y_1,y_2)$
of distinct points such that 
\begin{itemize}
\item $y_1$ and $y_2$ belong to distinct cells,
\item $\{ x_1, x_2\}$ is a loop in $U$ and $y_j\in W_j$ for $j=1,2$,
for some distinct parts $U, W_1, W_2\in Q$, and 
\item  neither $W_1$ nor $W_2$ contain a loop. 
\end{itemize}
Note that $\mathcal{S}$ contains every 4-tuple which defines a 
legal $\ell$-switching from $Q$, so $|\mathcal{S}|$ is an upper bound
for the number of legal $\ell$-switchings which can be performed in $Q$.

There are precisely $2\ell$ ways to choose a pair of points $(x_1,x_2)$ 
which form a loop in some part $U$,
using properties (i) and (ii) of the definition of $\Lambda_r^+(\kvec)$.
For an easy upper bound, there are at most $M^2$ ways to select $(y_1,y_2)$ with
the required properties, giving $|\mathcal{S}|\leq 2\ell M^2$.
In fact
\begin{equation}
 |\mathcal{S}| = 2\ell\, M^2 \left(1 + O\left(\frac{\kmax + \ell}{M}\right)\right),
\end{equation}
since there are precisely $M-r\ell$ ways to select a point $y_1$ which belongs to
some part $W_1$ which does not contain a loop, and then there are
$M - r(\ell+1) + O(\kmax) = M + O(\kmax + \ell)$ ways to select a point $y_2$ which lies in a part $W_2$
which contains no loops and which is
distinct from $W_1$, such that $y_1$ and $y_2$ not in the same
cell.

We now find an upper bound for the number of 4-tuples in $\mathcal{S}$ 
which give rise to illegal $\ell$-switchings,  and subtract this value from 
$|\mathcal{S}|$.
By Lemma~\ref{forward-legal} it suffices to find an upper bound
for the number of 4-tuples
in $\mathcal{S}$ which satisfy one of Conditions (I), (II), (III).
First note that no 4-tuple in $\mathcal{S}$ satisfies Condition (I), 
by definition of $\mathcal{S}$.

If Condition (II) holds then $f_1\cap f_2\neq \emptyset$ or
$e\cap f_j\neq \emptyset$ for some $j\in\{1,2\}$.
This occurs for at most $O(\ell\kmax M)$ 4-tuples in $\mathcal{S}$.

If Condition (III) holds then some edge $e'$ of $G(Q)\setminus \{ e, f_1, f_2\}$ 
intersects two of $e$, $f_1$ and $f_2$.  There are $O(\ell\kmax^2 M)$ choices
of 4-tuples in $\mathcal{S}$ which satisfy this condition.

Combining these contributions, we find that there are
\begin{equation}
\label{l-for}
 2\ell M^2\left( 1 + O\left(\frac{\kmax^{2} + \ell}{M}\right)\right)
\end{equation}
4-tuples $(x_1,x_2,y_1,y_2)$ which give a legal $\ell$-switching
from $Q$.

Next, suppose that $Q'\in \mathcal{C}_{\ell-1}$ (and note that $\mathcal{C}_{\ell-1}$
is nonempty, by definition of $\ell'$).  
Let $\mathcal{S}'$ be the set of all 4-tuples $(x_1,x_2,y_1,y_2)$ of
distinct points such that 
\begin{itemize}
\item $x_1$ and $x_2$ belong to the same cell, 
\item 
$x_j\in \widehat{W}_j$ for $j=1,2$ and 
and $y_1,y_2\in \widehat{U}$, 
for some distinct parts $\widehat{U}$, $\widehat{W}_1$, $\widehat{W}_2$ of $Q'$, and
\item
$\widehat{U}$ does not contain a loop (so in particular, $y_1$ and $y_2$ belong to distinct cells).  
\end{itemize}
Again, $\mathcal{S}'$ contains every 4-tuple which describes a 
legal reverse $\ell$-switching from $Q'$, so the number of legal
reverse $\ell$-switchings which may be performed in $Q'$ is at most
$|\mathcal{S}'|$.
There are $M_2$ choices for $(x_1,x_2)$,
and each such choice determines two distinct parts $\widehat{W}_1$, $\widehat{W}_2$
unless $\{x_1,x_2\}$ is a loop in some part of $Q'$.
Using properties (i) and (ii) of the definition of $\Lambda_r^+(\kvec)$,
there are exactly $2(\ell-1)$ choices of $(x_1,x_2)$ such that
$\{ x_1,x_2\}$ is a loop in $Q'$.
Next, there are precisely $M-r(\ell-1)$ choices for $y_1$ belonging to some part
$\widehat{U}$ which does not contain a loop, and then there are $r-1$ choices for
$y_2\in \widehat{U}\setminus\{ y_1\}$.  For a lower bound, there are
at least $(r-1)(M-r(\ell+1))$ choices for $(y_1,y_2)$ which
ensure that $\widehat{U}$ contains no loop and is distinct from both 
$\widehat{W}_1$ and $\widehat{W}_2$.  Therefore
\[ (r-1)\left(M - r(\ell+1)\right)\, \left(M_2 - 2(\ell-1)\right)\leq |\mathcal{S}'| \leq
    (r-1)\left(M -  r(\ell-1)\right)\, M_2, 
\]
which implies that $|\mathcal{S}'| = (r-1)M\, M_2\, (1 + O(\ell/M + \ell/M_2))$.

Now we must find an upper bound for the number of 4-tuples in $\mathcal{S}'$ 
which give an illegal reverse $\ell$-switching in $Q$, and subtract this number 
from $|\mathcal{S}'|$. 
By Lemma~\ref{legal-reverse} it suffices to find upper bounds for
the number of elements
of $\mathcal{S}'$ which satisfy (at least) one of conditions (I${}'$), (II${}'$) or (III${}'$).
If Condition (I${}'$) holds then $\widehat{W}_j$ contains a loop
for some $j\in\{1,2\}$, which is true for $O(\ell \kmax M)$ 4-tuples in 
$\mathcal{S}'$.
(Recall that $\widehat{U}$ has no loop, by definition of $\mathcal{S}'$.)
Condition (II${}'$) holds if $\widehat{e}\cap\widehat{f_j}$ is nonempty for
some $j\in \{ 1,2\}$.  This occurs for at most $O(\kmax M_2)$ 4-tuples in $\mathcal{S}'$.
Next, suppose that Condition (III${}'$) holds.  
Then there exists an edge $e'\in G(Q')\setminus \{ \widehat{e},\, \widehat{f}_1,\,
\widehat{f}_2\}$ which intersects both $\widehat{e}$ and $\widehat{f}_j$ for
some $j\in \{ 1,2\}$.  The number of 4-tuples in $\mathcal{S}'$ which satisfy this 
condition is $O(\kmax^2 M_2)$.

Putting these contributions together, the number of 4-tuples in $\mathcal{S}'$
which give a legal reverse $\ell$-switchings from $Q'$ is
\begin{equation}
\label{l-rev} 
  (r-1)M M_2\left(1 + O\left(\frac{\kmax^2}{M} + \frac{\ell\kmax}{M_2}\right) \right) =
  (r-1)M M_2\left(1 + O\left(\frac{\kmax^3 + \ell\kmax}{M_2}\right) \right),
\end{equation}
since $1/M \leq \kmax/M_2$.
Combining (\ref{l-for}) and (\ref{l-rev}) completes the proof. 
\end{proof}

The following summation lemma from~\cite{GMW} will be needed,
and for completeness we state it here. (The statement has
been adapted slightly from that given in~\cite{GMW}, without affecting 
the proof given there.)

\begin{lemma}[{\cite[Corollary 4.5]{GMW}}]\label{sumcor2}
Let $N\geq 2$ be an integer and, for $1\leq i\leq N$, let real
numbers $A(i)$, $C(i)$ be given such that $A(i)\geq 0$ and
$A(i)-(i-1)C(i) \ge 0$.
Define 
$A_1 = \min_{i=1}^N A(i)$, $A_2 = \max_{i=1}^N A(i)$,
$C_1 = \min_{i=1}^N C(i)$ and $C_2=\max_{i=1}^N C(i)$.
Suppose that there exists a real number $\hat{c}$ with 
$0<\hat{c} < \tfrac{1}{3}$ such that 
$\max\{ A_2/N,\, |C_1|, \, |C_2|\} \leq \hat{c}$.
Define $n_0,\ldots ,n_N$ by $n_0=1$ and
\[ n_i  = \frac{1}{i}\bigl(A(i)-(i-1)C(i)\bigr)\, n_{i-1}\]
for $1\leq i\leq N$.  Then
\[ \varSigma_1 \leq \sum_{i=0}^N n_i\leq \varSigma_2, \]
where
\begin{align*}
 \varSigma_1 &= \exp\bigl( A_1 - \tfrac{1}{2} A_1 C_2 \bigr)
               - (2e\hat{c})^N,\\
 \varSigma_2 &= \exp\bigl( A_2 - \tfrac{1}{2} A_2 C_1 +
              \tfrac12 A_2 C_1^2 \bigr) + (2e\hat{c})^N.
     \quad\qedsymbol
\end{align*}
\end{lemma}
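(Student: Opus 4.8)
The plan is to solve the recurrence in closed form and then sandwich the truncated sum between two explicit series that can be evaluated exactly. Unwinding $n_i = \tfrac{1}{i}(A(i)-(i-1)C(i))\,n_{i-1}$ from $n_0=1$ gives
\[
 n_i = \frac{1}{i!}\prod_{j=1}^{i}\bigl(A(j)-(j-1)C(j)\bigr),
\]
and by hypothesis every factor is nonnegative, so each $n_i\ge 0$. I would replace the varying quantities by their extreme values, setting $\bar n_i = \tfrac{1}{i!}\prod_{j=1}^i(A_2-(j-1)C_1)$ for an upper bound and $\underline n_i = \tfrac{1}{i!}\prod_{j=1}^i(A_1-(j-1)C_2)$ for a lower bound. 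The key identity is the generalised binomial theorem,
\[
 \sum_{i=0}^{\infty}\frac{1}{i!}\prod_{j=0}^{i-1}(A-jC) = (1+C)^{A/C}\qquad(|C|<1),
\]
read as $e^{A}$ when $C=0$. Taking logarithms, $\tfrac{A}{C}\log(1+C) = A-\tfrac12 AC+\tfrac13 AC^2-\cdots$, and the constraint $|C|\le\hat c<\tfrac13$ lets me bound this from above by $A-\tfrac12 AC+\tfrac12 AC^2$ (via $\sum_{k\ge 0}\frac{|C|^k}{k+3}\le\tfrac12$) and from below by $A-\tfrac12 AC$ (the remaining terms having the right sign). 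This produces exactly the two exponentials in $\varSigma_1,\varSigma_2$.

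For the upper bound I first check $\bar n_i\ge n_i$ factor by factor: since $A_2\ge A(j)$ and $C_1\le C(j)$, each factor satisfies $A_2-(j-1)C_1\ge A(j)-(j-1)C(j)\ge 0$, so the comparison is legitimate while all factors stay nonnegative. If $C_1\le 0$ every $\bar n_i$ is nonnegative and $\sum_{i=0}^N\bar n_i\le\sum_{i=0}^{\infty}\bar n_i=(1+C_1)^{A_2/C_1}$. If $C_1>0$ the factors eventually turn negative, so the tail of $\sum\bar n_i$ becomes an alternating series with geometrically decreasing terms; the truncated sum then differs from $(1+C_1)^{A_2/C_1}$ by at most $|\bar n_{N+1}|$ (or not at all, if the sign change happens beyond $N$). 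Either way $\sum_{i=0}^N n_i\le (1+C_1)^{A_2/C_1}+|\bar n_{N+1}|$, and the logarithmic estimate above converts the closed form into $\varSigma_2$.

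The lower bound is symmetric but needs more care, because when $C_2>0$ the factors $A_1-(j-1)C_2$ can become negative before $i$ reaches $N$, invalidating the termwise comparison. I would truncate the comparison at the largest index $i^{\ast}$ for which all factors are still nonnegative, using $\sum_{i=0}^N n_i\ge\sum_{i=0}^{\min(N,i^{\ast})}n_i\ge\sum_{i=0}^{\min(N,i^{\ast})}\underline n_i$; the discarded $n_i$ are nonnegative, so dropping them only helps. When $i^{\ast}<N$, cutting $\sum\underline n_i$ just before its first (negative) term makes the partial sum at least the full value $(1+C_2)^{A_1/C_2}$, so no correction is lost; when $i^{\ast}\ge N$ one subtracts a genuine alternating tail bounded by $(2e\hat c)^N$. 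The single quantitative estimate driving everything is the tail bound $|\bar n_N|,|\underline n_N|\le(2e\hat c)^N$: each of the $N$ factors is at most $A_2+(N-1)|C_1|\le 2\hat c N$ (using $A_2/N\le\hat c$ and $|C_1|,|C_2|\le\hat c$), so the product is at most $(2\hat c N)^N$, and dividing by $N!\ge(N/e)^N$ gives $(2e\hat c)^N$; since the ratio of consecutive terms past index $N$ is below $2\hat c<1$, the whole tail is geometric and controlled by the same quantity.

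I expect the main obstacle to be the bookkeeping around the sign of the factors $A_1-(j-1)C_2$ and $A_2-(j-1)C_1$ when the relevant $C$ is positive: this is precisely the regime in which the auxiliary series ceases to be a sum of nonnegative terms, and one must exploit its alternating structure instead of a naive termwise inequality. Everything else — extracting the closed form, the Taylor estimates for $\tfrac{A}{C}\log(1+C)$ enabled by $\hat c<\tfrac13$, and the Stirling-based tail bound — is routine.
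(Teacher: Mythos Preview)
The paper does not prove this lemma at all: it is quoted verbatim (with a slight adaptation) from \cite[Corollary~4.5]{GMW}, and the \qedsymbol\ placed at the end of the statement signals that no proof is given here. So there is nothing in the present paper to compare your attempt against.

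That said, your sketch is the standard route to this kind of result and is essentially correct. Unwinding the recurrence to a falling-factorial product, sandwiching the variable coefficients between the extremes $(A_1,C_2)$ and $(A_2,C_1)$, summing the resulting constant-coefficient series exactly via the generalised binomial theorem, and then bounding $\tfrac{A}{C}\log(1+C)$ by the appropriate Taylor truncation is exactly how the proof in \cite{GMW} proceeds. Your identification of the one genuinely delicate point --- the sign changes of the auxiliary factors when the relevant $C$ is positive, and the need to invoke the alternating-series remainder bound rather than a termwise inequality there --- is accurate, and your handling of it (truncate the lower comparison at the last index $i^\ast$ with all factors nonnegative, then use that the next partial sum already overshoots the limit) is the right idea. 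One small point: your tail estimate bounds a single term $|\bar n_N|$ by $(2e\hat c)^N$, but the full geometric tail picks up a factor of $1/(1-2\hat c)\le 3$; in practice this constant is immaterial, and in the alternating regime the first-omitted-term bound avoids it entirely, but you should track which regime you are in when matching the exact constant $(2e\hat c)^N$ stated in the lemma.
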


This summation lemma will now be applied.

\begin{lemma}
Under the conditions of Theorem~\ref{main} we have
\[ \sum_{\ell = 0}^{N} |\mathcal{C}_\ell| = |\mathcal{C}_0| \,
  \exp\left( \frac{(r-1)M_2}{2M} + 
    O\left(\frac{\kmax^{3}}{M}\right)\right).\]
\label{l-switch}
\end{lemma}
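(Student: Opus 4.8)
The plan is to set $n_\ell = |\mathcal{C}_\ell|/|\mathcal{C}_0|$ for $0\le\ell\le N$, so that $n_0=1$ and $\sum_{\ell=0}^N|\mathcal{C}_\ell| = |\mathcal{C}_0|\sum_{\ell=0}^N n_\ell$, and then to evaluate the sum $\sum_{\ell=0}^N n_\ell$ by feeding the recursion of Lemma~\ref{ratio} into the summation Lemma~\ref{sumcor2}. Write $\lambda = (r-1)M_2/(2M)$ for the intended main term. For $1\le\ell<\ell'$, Lemma~\ref{ratio} gives $n_\ell = \frac{1}{\ell}\bigl(A(\ell)-(\ell-1)C(\ell)\bigr)\,n_{\ell-1}$ as soon as we arrange $A(\ell)-(\ell-1)C(\ell) = \lambda\bigl(1+O((\kmax^{3}+\ell\kmax)/M_2)\bigr)$. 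The first step is to split this relative error into an $\ell$-independent piece and an $\ell$-linear piece: writing it as $\lambda(1+\alpha_\ell)+\lambda\beta_\ell(\ell-1)$ with $|\alpha_\ell|=O(\kmax^{3}/M_2)$ and $|\beta_\ell|=O(\kmax/M_2)$ (possible since $\ell\kmax=\kmax+(\ell-1)\kmax$ and $\kmax\le\kmax^{3}$), I would set $A(\ell)=\lambda(1+\alpha_\ell)=\lambda+O(\kmax^{3}/M)$ and $C(\ell)=-\lambda\beta_\ell=O(\kmax/M)$, using $\lambda/M_2=(r-1)/(2M)$.

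Next I would verify the hypotheses of Lemma~\ref{sumcor2} and read off its output. Since $A(\ell)=\lambda+O(\kmax^{3}/M)$ and $C(\ell)=O(\kmax/M)$, the extreme values satisfy $A_1,A_2=\lambda+O(\kmax^{3}/M)$ (so $A_2-A_1=O(\kmax^{3}/M)$) and $|C_1|,|C_2|=O(\kmax/M)=o(1)$. The choice $N\ge 9(r-1)M_2/M=18\lambda$ is exactly what forces $A_2/N\le \tfrac{1}{18}+o(1)$, so one may take $\hat c=\tfrac{1}{18}+o(1)<\tfrac13$; moreover $2e\hat c<1$, whence $(2e\hat c)^N=o(1/M)$ because $N\ge\log M$ (the same computation as in the proof of Lemma~\ref{pr-simple}). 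In the exponents of $\Sigma_1,\Sigma_2$ every cross term is $O(\kmax^{3}/M)$: indeed $A_2C_1=O(\lambda\kmax/M)=O(\kmax^{2}/M)=O(\kmax^{3}/M)$ because $\lambda=O(M_2/M)=O(\kmax)$ and $\kmax\le\kmax^{3}$, and $A_2C_1^{2}$ is smaller still. Hence both $\Sigma_1$ and $\Sigma_2$ equal $\exp(\lambda+O(\kmax^{3}/M))$ up to the negligible additive $o(1/M)$ terms, yielding $\sum_{\ell=0}^N n_\ell=\exp\bigl((r-1)M_2/(2M)+O(\kmax^{3}/M)\bigr)$, which is the assertion.

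The main obstacle is that Lemma~\ref{ratio} supplies the recursion only for $\ell<\ell'$, so I must justify running the summation lemma out to $N$. I would first observe that $\mathcal{C}_\ell=\emptyset$ for every $\ell\ge\ell'$: a nonempty $\mathcal{C}_\ell$ with $\ell\ge1$ admits a legal forward $\ell$-switching (the count of such switchings in the proof of Lemma~\ref{ratio} is positive), which forces $\mathcal{C}_{\ell-1}\neq\emptyset$, so emptiness propagates upward. Thus $n_\ell=0$ for $\ell\ge\ell'$ and the true sum is $\sum_{\ell<\ell'}n_\ell$. Extending $A,C$ past $\ell'$ while keeping the same bounds produces a nonnegative sequence dominating $(n_\ell)$, which gives the upper bound at once; for the lower bound I would show the discarded tail $\sum_{\ell\ge\ell'}$ is negligible, using that the reverse-switching count remains positive for $\ell\lesssim M_2/\kmax$, so $\ell'$ either equals $N+1$ (no tail) or exceeds the peak location $\lambda$ by a large factor, whence the tail decays geometrically and is dominated by the $(2e\hat c)^N$ error already present.

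The remaining delicate point, which I would treat as a short separate case, is the degenerate regime $M_2=O(\kmax^{3})$: there $A(\ell)=\lambda(1+\alpha_\ell)$ need not be nonnegative and Lemma~\ref{sumcor2} does not apply. But then $\lambda=O(\kmax^{3}/M)$, and it suffices to use the elementary two-sided bound
\[
  1\le \sum_{\ell=0}^N n_\ell \le e^{2\lambda}.
\]
The upper estimate follows from $n_\ell\le \tfrac{2\lambda}{\ell}\,n_{\ell-1}$, which I would obtain from the crude switching counts in the proof of Lemma~\ref{ratio} (at least $(1-o(1))\,2\ell M^{2}\ge \ell M^{2}$ legal forward switchings from each $Q\in\mathcal{C}_\ell$, and at most $(r-1)M M_2$ legal reverse switchings into $\mathcal{C}_\ell$ from each $Q'\in\mathcal{C}_{\ell-1}$), giving $n_\ell\le (2\lambda)^{\ell}/\ell!$; the lower estimate is just $n_0=1$. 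Since $\lambda=O(\kmax^{3}/M)$, this pins $\log\sum_\ell n_\ell$ to $\lambda+O(\kmax^{3}/M)$, completing this case and hence the proof.
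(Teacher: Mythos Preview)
Your proof is correct and follows the same core strategy as the paper: feed the recursion of Lemma~\ref{ratio} into the summation Lemma~\ref{sumcor2}, with $A(\ell)=\lambda+O(\kmax^3/M)$ and $C(\ell)=O(\kmax/M)$, and absorb the cross terms $A_2C_1$, $A_2C_1^2$ and the $(2e\hat c)^N$ remainder into $O(\kmax^3/M)$. The differences are only in the edge cases. To guarantee $A(\ell)\ge 0$, the paper picks a single bounded $\beta_\ell$ and writes $A(\ell)=\bigl((r-1)M_2-\beta_\ell\kmax^3\bigr)/(2M)$, $C(\ell)=\beta_\ell\kmax/(2M)$; a short sign argument on $\beta_\ell$ then gives $A(\ell)\ge 0$ in all cases, so no separate ``degenerate'' regime is needed. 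For the lower bound when $\ell'\le N$, the paper takes the simpler route: it sets $A(\ell)=C(\ell)=0$ for $\ell\ge\ell'$ (so the sequence in Lemma~\ref{sumcor2} is exactly $(n_\ell)$), observes from (\ref{l-rev}) that the vanishing of the reverse-switching count forces $M_2=O(\kmax^3+\ell'\kmax)=o(M)$, hence $\lambda=o(1)$, and then the trivial lower bound $1$ (when $\ell'=1$) or $1+A(1)$ (when $\ell'\ge 2$) already matches $\exp(\lambda+O(\kmax^3/M))$. Your tail-bounding argument for the extended sequence is valid once the geometric decay is made precise, but the paper's observation that $\lambda=o(1)$ in this case makes the tail estimate unnecessary.
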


\begin{proof}
Let $\ell'$ be as defined in Lemma~\ref{ratio}.
By (\ref{l-for}), any $Q\in\mathcal{C}_\ell$ can be converted
to some $Q'\in\mathcal{C}_{\ell-1}$ using an $\ell$-switching.  Hence
$\mathcal{C}_\ell=\emptyset$ for $\ell'\leq \ell\leq N$. In particular,
the lemma holds if $\mathcal{C}_0=\emptyset$, so we assume that $\ell'\geq 1$.

By Lemma~\ref{ratio}, there exists some  uniformly bounded function
$\beta_\ell$ such that
\begin{equation}\label{lrat}
\frac{|\mathcal{C}_\ell|}{|\mathcal{C}_0|} = \frac{1}{\ell}\,
  \frac{|\mathcal{C}_{\ell-1}|}{|\mathcal{C}_0|}\,  \bigl( A(\ell) - (\ell-1)C(\ell)\bigr)
\end{equation}
for $\ell = 1,\ldots, N$, where
\[
  A(\ell) = \frac{(r-1)M_2-\beta_\ell\, \kmax^{3}}{2M},
  \quad C(\ell) = \frac{\beta_\ell\, \kmax}{2M}
\]
for $1\le\ell<\ell'$,  and $A(\ell) = C(\ell) = 0$ for $\ell'\leq \ell \leq N$.

Now we apply Lemma~\ref{sumcor2}.
It is clear that $A(\ell) - (\ell-1)C(\ell)\geq 0$, from (\ref{lrat})
if $1\leq \ell < \ell'$, or by definition if $\ell'\leq \ell \leq N$.
If $\beta_\ell \geq 0$ then $A(\ell)\geq A(\ell)-(\ell-1)C(\ell) \geq 0$,
while if $\beta_\ell < 0$ then $A(\ell)$ is nonnegative by definition.
Next, define $A_1, A_2, C_1, C_2$ to be the minimum and maximum
of $A(\ell)$ and $C(\ell)$ over $1\leq \ell\leq N$, as in
Lemma~\ref{sumcor2},  
and set $\hat c=\frac{1}{16}$.   Since $A_2 = (r-1)M_2/(2M) + o(1)$
and $C_1,C_2 = o(1)$, we have that $\max\{ A_2/N,\, |C_1|,\, |C_2|\}\leq \hat{c}$
for $M$ sufficiently large, by definition of $N$.  
Lemma~\ref{sumcor2} applies and gives an upper bound
\[ \sum_{\ell = 0}^{N} \frac{|\mathcal{C}_\ell|}
   {|\mathcal{C}_0|} 
 \leq  \exp\left( \frac{(r-1)M_2}{2M} + 
    O\left(\frac{\kmax^{3}}{M}\right)\right) 
       + O\bigl( (e/8)^{N}\bigr).\]
Now $(e/8)^{N}\leq (e/8)^{\log M} \leq M^{-1}$, 
which leads to
\begin{equation}
\label{upper} \sum_{\ell = 0}^{N} \frac{|\mathcal{C}_\ell|}
   {|\mathcal{C}_0|} 
 \leq  \exp\left( \frac{(r-1)M_2}{2M} + 
    O\left(\frac{\kmax^{3}}{M}\right)\right). 
\end{equation} 
If $\ell' = N + 1$ then the lower bound given by Lemma~\ref{sumcor2}
is the same as the upper bound (\ref{upper}), within the stated error term, 
establishing the result in this case.

Finally suppose that $1\leq \ell'\leq N$.  Then (\ref{l-rev})
shows that 
\[ M_2 = O(\kmax^3 + \ell' \kmax) = o(M + M^{1/3}\log M) = o(M).\]
If $\ell'=1$ then
$M_2 = O(\kmax^{3})$ and hence $(r-1)M_2/(2M) = O\bigl(\kmax^3/M\bigr)$,
so in this case the trivial lower bound of 1 matches the upper bound (\ref{upper}), 
within the stated error term.  
If $2\leq \ell'\leq N$ then using (\ref{lrat}) with $\ell=1$, we obtain
\[ \sum_{\ell = 0}^{N} \frac{|\mathcal{C}_\ell|}{|\mathcal{C}_0|} 
  \geq 1 + \frac{|\mathcal{C}_1|}{|\mathcal{C}_0|} = 
    1 + A(1) = 1 + \frac{(r-1)M_2}{2M} + O\bigl(\kmax^3/M\bigr).
\] 
Since here $M_2 = o(M)$, this expression matches the upper bound (\ref{upper}), within the
stated error term.  This completes the proof.
\end{proof}

Theorem~\ref{main} now follows immediately, by 
combining (\ref{pr-equation}), (\ref{a=0}) and Lemma~\ref{l-switch}.

\end{document}